\spnewtheorem{assumption}[theorem]{Assumption}{\bfseries}{\itshape}
\spnewtheorem{algorithm}[theorem]{Algorithm}{\bfseries}{\itshape}
\begin{document}

\title{Convergence Properties of a \\ Randomized Primal-Dual Algorithm \\ with Applications to Parallel MRI\thanks{MJE and CD acknowledge support from the EPSRC (EP/S026045/1). MJE is also supported by EPSRC (EP/T026693/1), the Faraday Institution
(EP/T007745/1) and the Leverhulme Trust (ECF-2019-478). EBG acknowledges the Mexican Council of Science and Technology (CONACyT).}}
\titlerunning{Convergence of a Random Primal-Dual Algorithm}
\author{Eric B. Guti\'errez\inst{1}
\and Claire Delplancke \inst{1}
\and Matthias J. Ehrhardt \inst{1,2}}
\authorrunning{E.B. Guti\'errez, C. Delplancke, and M.J. Ehrhardt}
\institute{Department of Mathematical Sciences, University of Bath, Bath, UK  \and Institute for Mathematical Innovation, University of Bath, Bath, UK\\ \email{ebgc20@bath.ac.uk, cd902@bath.ac.uk, M.Ehrhardt@bath.ac.uk}}
\maketitle

%%%%%% Abstract
\begin{abstract}
  The Stochastic Primal-Dual Hybrid Gradient (SPDHG) was proposed by Chambolle et al. (2018) and is an efficient algorithm to solve some nonsmooth large-scale optimization problems. In this paper we prove its almost sure convergence for convex but not necessarily strongly convex functionals. We also look into its application to parallel Magnetic Resonance Imaging reconstruction in order to test performance of SPDHG. Our numerical results show that for a range of settings SPDHG converges significantly faster than its deterministic counterpart.
  \keywords{ Inverse Problems \and Parallel Magnetic Resonance Imaging \and Stochastic Optimization \and Primal-dual Algorithm \and Convex Optimization}
\end{abstract}

%%%%%%%%%%%%%%%%%%%  Introduction  %%%%%%%%%%%%%%%%%%%%%
\section{Introduction}

Optimization problems have numerous applications among many fields such as imaging, data science or machine learning, to name a few. Many such problems in these areas are often formulated as
\begin{equation} \label{min}
    \hat{x} \in \arg\min_{x\in X} \sum_{i=1}^n f_i(A_ix) + g(x) 
\end{equation}
where $f_i:Y_i\to\mathbb{R}\cup\{\infty\}$ and $g:X\to\mathbb{R}\cup\{\infty\}$ are convex functionals, and $A_i:X\to Y_i$ are linear operators between finite-dimensional real Hilbert spaces.

Examples of such problems are total variation regularized image reconstruction \cite{fercoqBianchi2019,ROF} such as image denoising \cite{chambolle2016introduction} or PET reconstruction \cite{ehrhardt2019faster}; regularized empirical risk minimization \cite{shalevZhang,zhangXiao} such as support vector machine (SVM) \cite{boserGuyonVapnik} or least  absolute  shrinkage  and  selection operator (LASSO) \cite{cevherBigData}; and optimization with large number of constraints \cite{fercoq_etal2019,patrascuNecoara2017}, among others.

While some classical approaches such as gradient descent are not applicable when the functionals $f_i$ or $g$ are not smooth \cite{chambolle2016introduction}, primal-dual methods are able to find solutions to (\ref{min}) without assuming differentiability. For convex, proper and lower-semicontinuous functionals $f_i,g$, a primal-dual formulation for (\ref{min}) reads
\begin{equation} \label{saddle}
    \hat{x},\hat{y} \in \arg\min_{x\in X}\max_{y\in Y} \sum_{i=1}^n \langle A_ix,y_i \rangle - f_i^*(y_i) + g(x)
\end{equation}
where $f^*$ is the \textit{convex conjugate} of $f$ and $Y=\Pi_{i=1}^n Y_i$. We refer to any solution $\hat{w}=(\hat{x},\hat{y})$ of (\ref{saddle}) as a \emph{saddle point}.

A well-known example of primal-dual methods that solve (\ref{saddle}) is the Primal-Dual Hybrid Gradient (PDHG) \cite{chambollePock,esserZhangChan,CremersChambollePock}, as presented by Chambolle \& Pock (2011).
It naturally breaks down the complexity of (\ref{min}) into separate optimization problems by doing separate updates for the primal and dual variables $x,y$, as shown in (\ref{det}).
PDHG is proven to converge to a solution of (\ref{saddle}), however its iterations become very costly for large-scale problems, e.g. when $n\gg1$ \cite{spdhg}.

More recently, Chambolle et al. proposed the Stochastic Primal-Dual Hybrid Gradient (SPDHG) \cite{spdhg} which reduces the per-iteration computational cost of PDHG by randomly sampling the dual variable: at each step, instead of the full dual variable $y$, only a random subset of its coordinates $y_i$ gets updated. This offers significantly better performance than the deterministic PDHG for large-scale problems \cite{spdhg}. Examples of similar random primal-dual algorithms are found in %\cite{dangLan2014randomPD,fercoqBianchi2019,gao2019randomPD,latafat2019randomPD,zhangXiao}.
\cite{fercoqBianchi2019,gao2019randomPD,latafat2019randomPD,zhangXiao}.

We are interested in the convergence of SPDHG. In \cite{spdhg}, it is shown that, for arbitrary convex functionals $f_i$ and $g$, SPDHG converges in the sense of \textit{Bregman distances}, which does not imply convergence in the norm. 
In this paper we present a proof for the almost sure convergence of SPDHG for convex but not necessarily strongly convex functionals, using alternative arguments to the recently proposed proof by Alacaoglu et al. (\cite{alacaoglu}, Theorem 4.4). In contrast to~\cite{alacaoglu}, where they represent SPDHG by using projections of a single iterative operator, we present a more intuitive representation through a random sequence of operators (Lemma~\ref{fixpoints}). %$T^k:X\times Y\to X\times Y$. 
A summary of the proof is laid out in Section~\ref{sec:ske} and the complete proof is detailed in Section~\ref{sec:conv}. A comparison with the proof of Alacaouglu et al. and other related work is discussed in Section~\ref{sec:rel}.

Finally, in Section~\ref{sec:numeric} we look into the application of parallel Magnetic Resonance Imaging (MRI) in order to compare the performance of SPDHG with that of the deterministic PDHG.

%%%%%%%%%%%%%%%%%%%%%%%%%%%%%%%%%%%%%%%%%%%%%%%%%%%%%%
\section{Algorithm} In order to solve (\ref{saddle}), the deterministic PDHG with dual extrapolation~\cite{chambollePock} reads
\begin{equation} \label{det}
\begin{aligned}
    x^{k+1} & =\mathrm{prox}_{\tau g}(x^{k} - \tau A^T\bar{y}^{k}) \\
    y^{k+1} &  = 
    \mathrm{prox}_{\sigma f^{*}}(y^{k}+\sigma Ax^{k+1})
\end{aligned}
\end{equation}
\vspace{0mm}
where $\bar{y}^k = 2y^k - y^{k-1}$ is an extrapolation on the previous iterates, $A$ and $f$ are given by $Ax=(A_1x,...,A_nx)$ and $f(y)=\sum_{i}f_i(y_i)$, and the \textit{proximity operator} of any functional $f$ is defined as $\mathrm{prox}_{\sigma f}(v) := \arg\min_{y\in Y}\frac{\|v-y\|^2}{2} + \sigma f(y)$. 

SPDHG, in contrast, reduces the cost of iterations by only partially updating the dual variable $y=(y_i)_{i=1}^n$: at every iteration $k$, choose $j\in\{1,...,n\}$ at random with probability $p_i=\mathbb{P}(j=i)>0$, so that only the variable $y_{j}^{k+1}$ is updated, while the rest remain unchanged, i.e. $y_i^{k+1} = y_i^{k}$ for $i\neq j$. %SPDHG is summarized in Algorithm \ref{alg_spdhg}.

%%%%%%%% Algorithm 
\begin{algorithm}[SPDHG]
\label{alg_spdhg}
\begin{algorithmic}
\STATE{Choose $\tau,\sigma_i>0$ and $x^0\in X$. Set $y^0=\textbf{0}\in Y$ and $z^0=\bar{z}^0=\textbf{0}\in X.$ }
%\FOR{$k\geq0$}
\STATE{For $k\geq0$ do}
\STATE{ \vspace{-6mm}
\begin{equation*}
\begin{aligned}
& \hspace{11mm} \text{select } j^{k} \in\{1,...\,,n\} \text{ at random} \hspace{64mm} \\
& \begin{aligned} 
    \hspace{16mm} x^{k+1} & =\mathrm{prox}_{\tau g}(x^{k} - \tau\bar{z}^k) \\
    y_{i}^{k+1} &  = \left\{ 
    \begin{tabular}{ll}
    $\mathrm{prox}_{\sigma_i f_{i}^{*}}(y_{i}^{k}+\sigma_{i} A_{i}x^{k+1})$ \quad\quad &  if $i=j^{k}$ \\
    $y_{i}^{k}$ & else
    \end{tabular} \right. \\
    \delta^k  &= A^T_{j^{k}}(y_{j^{k}}^{k+1}-y_{j^{k}}^k) \\
           z^{k+1} &= z^k + \delta^k  \\
     \bar{z}^{k+1} &= z^{k+1} + {p^{-1}_{j^k}}\delta^k 
\end{aligned}
\end{aligned}
\end{equation*} }
%\ENDFOR
\end{algorithmic}
\end{algorithm}

% \begin{remark}
% In order to compute $\delta^k$ for the dual extrapolation $\bar{z}^{k+1}$, it is necessary to recall $y^k_{j^k}$ from memory. Only the two latest versions of each $y_i$ need to be stored. The variables $\delta^k, z^k$ and $\bar{z}^k$ each require the same memory as $x^k$.
% \end{remark}

%%%%%%%%%%%%%%%%%%%%%%%   main   %%%%%%%%%%%%%%%%%%%%%%%%%%%
\section{Main Result}
\label{sec:main}
We establish the almost sure convergence of SPDHG for any convex functionals, under the same step size conditions as in \cite{spdhg}:

\begin{assumption} \label{assu}
We assume the following to hold:
\begin{enumerate}
    \item The set of solutions to (\ref{saddle}) is nonempty. 
    \item The functionals $g,f_i$ are convex, proper and lower-semicontinuous.
    \item The step sizes $\tau,\sigma_i>0$ satisfy
    \begin{equation} \label{assu_i}
        \tau\sigma_i\|A_i\|^2 < p_i \quad\text{for every } i.
    \end{equation} 
\end{enumerate}
\end{assumption}

\begin{theorem}[\textbf{Convergence of SPDHG}]
\label{main}
Let $(w^k)_{k\in\mathbb{N}}=(x^k,y^k)_{k\in\mathbb{N}}$ be a random sequence generated by Algorithm \ref{alg_spdhg}. Under Assumption~\ref{assu}, 
the sequence $(w^k)_{k\in\mathbb{N}}$ converges almost surely to a solution of (\ref{saddle}). 
\end{theorem}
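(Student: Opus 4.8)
The plan is to run the classical stochastic Fejér-monotonicity scheme: a one-step energy inequality, the Robbins--Siegmund supermartingale theorem, a demiclosedness (cluster-point) argument, and a stochastic Opial lemma. I work in the variable $w=(x,y)$, first verifying that $z^k=A^T y^k$ is an algorithmic invariant (it holds at $k=0$ since $z^0=A^T y^0=\mathbf 0$, and is preserved by the update $z^{k+1}=z^k+A_{j^k}^T(y_{j^k}^{k+1}-y_{j^k}^k)$), so that the auxiliary variables carry no independent state. I then use the representation of Lemma~\ref{fixpoints}, writing one iteration as $w^{k+1}=T_{j^k}(w^k)$, where the common fixed-point set of the operators $(T_i)_{i=1}^n$ is exactly the solution set of \eqref{saddle}.

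First I would fix a saddle point $\hat w=(\hat x,\hat y)$ and introduce a weighted energy of the form
\[
V_k=\frac{1}{2\tau}\|x^k-\hat x\|^2+\sum_{i=1}^n\frac{1}{2\sigma_i p_i}\|y_i^k-\hat y_i\|^2,
\]
the weights $1/(\sigma_i p_i)$ being the natural ones compensating for the fact that coordinate $i$ is refreshed only with probability $p_i$. Expanding the squared norms through the three-point prox inequalities satisfied by the primal update and by the selected dual update, and then taking the conditional expectation $\mathbb{E}[\,\cdot\mid\mathcal{F}_k]$ over the random index $j^k$, the mixed terms involving the operators $A_i$ are controlled precisely because Assumption~\ref{assu} supplies $\tau\sigma_i\|A_i\|^2<p_i$ in \eqref{assu_i}. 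This should yield a stochastic quasi-Fejér inequality $\mathbb{E}[V_{k+1}\mid\mathcal{F}_k]\le V_k-c\,R_k$, where $R_k\ge 0$ is a residual vanishing exactly at fixed points; a small correction term may have to be added to $V_k$ to absorb the dual extrapolation carried by $\bar z$ and make the right-hand side genuinely non-increasing in expectation.

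Next I would apply the Robbins--Siegmund theorem, obtaining on a single almost-sure event that $(V_k)$ converges to a finite limit and that $\sum_k R_k<\infty$, hence $R_k\to 0$ almost surely. Convergence of $V_k$ makes $(w^k)$ bounded almost surely, so cluster points exist. From $R_k\to 0$ together with the demiclosedness of the proximal maps and of the subdifferentials, I would then argue that along any convergent subsequence $w^{k_\ell}\to w^*$ the limit satisfies the fixed-point conditions of Lemma~\ref{fixpoints}, i.e. $w^*$ is almost surely a saddle point.

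Finally I would close with a stochastic Opial lemma. The delicate point is that the almost-sure event on which $V_k(\hat w)$ converges depends on the chosen $\hat w$, whereas I need convergence simultaneously for enough saddle points to identify a single limit; I would resolve this by running the energy argument over a fixed countable dense subset of the closed convex solution set, intersecting the corresponding full-measure events, and using continuity of $\hat w\mapsto V_k(\hat w)$ to extend convergence to the whole solution set on that common event. Combined with the fact that every cluster point is a saddle point, Opial's lemma forces the entire sequence to converge almost surely to one random saddle point. I expect the last two steps to be the main obstacles: because only a single random coordinate is updated per iteration, the per-step residual $R_k$ does not directly control the full optimality residual, and bridging this gap through the operator representation of Lemma~\ref{fixpoints} is where care is needed; making the uniformity over the uncountable solution set rigorous (via the countable dense subset and a single common null set) is the technical difficulty that distinguishes this almost-sure statement from the weaker Bregman-distance convergence of \cite{spdhg}.
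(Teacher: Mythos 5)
Your overall architecture coincides with the paper's: your weighted energy with weights $1/(\sigma_i p_i)$ is the paper's $\|\cdot\|^2_{QS^{-1}}$ norm, the ``small correction term'' you anticipate to absorb the dual extrapolation is exactly the cross term $-2\langle QAx,\,y^k-y^{k-1}\rangle + \|y^k-y^{k-1}\|^2_{QS^{-1}}$ appearing in the paper's $V^k$ (Lemma~\ref{ala}), your supermartingale step is Lemma~\ref{lemma2.2}, and your countable-dense-subset device for handling the $\hat w$-dependent null sets is precisely the content of Lemma~\ref{tozero2} (Combettes \& Pesquet), which the paper invokes for the same purpose. However, the step that you yourself flag as the main obstacle --- passing from the vanishing per-step residual to ``every cluster point is a saddle point'' --- is left genuinely open in your proposal, and it is exactly the content of Proposition~\ref{cluster_saddle}; without it the proof does not close.

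Two concrete problems there. First, your iteration identity $w^{k+1}=T_{j^k}(w^k)$ is false: the primal update is $x^{k+1}=\mathrm{prox}_{\tau g}(x^k-\tau\bar z^k)$, and $\bar z^k$ depends on $y^{k-1}$ and $j^{k-1}$, so $(x^{k+1},y^{k+1})$ is not a function of $(x^k,y^k)$ alone. The correct statement (Lemma~\ref{fixpoints}) is the shifted identity $T_{j^k}(x^{k+1},y^{k})=(x^{k+2},y^{k+1})$; one must run the fixed-point argument on the shifted state $z^k=(x^{k+1},y^k)$ and afterwards transfer cluster points back to $(w^k)_{k\in\mathbb{N}}$ using $x^{k+1}-x^k\to0$ and $y^k-y^{k-1}\to0$. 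Second, and more importantly, the missing bridge between the single-coordinate residual and the full fixed-point condition is the conditional-expectation decomposition: since $j^k$ is drawn independently of the past,
\[
\mathbb{E}^{k}\big(\|z^{k+1}-z^{k}\|^2\big)\;=\;\sum_{j=1}^n p_j\,\|T_j z^{k}-z^{k}\|^2 ,
\]
so the summability $\sum_k \mathbb{E}\big(\|z^{k+1}-z^k\|^2\big)<\infty$ coming from the quasi-Fej\'er inequality (the paper's (\ref{ntozero})), together with $p_j>0$ for \emph{every} $j$, gives $\sum_k \mathbb{E}\big(\|T_j z^k-z^k\|^2\big)<\infty$ and hence $T_j z^k-z^k\to 0$ almost surely for every $j$ --- note that summability of the expectations is what licenses the almost-sure statement (mere convergence $\mathbb{E}\|T_jz^k-z^k\|^2\to0$ would not, and almost-sure convergence is indispensable because the convergent subsequence is itself random). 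Once this is in place, continuity of each $T_j$ suffices (no demiclosedness machinery is needed in this finite-dimensional setting) to conclude that every cluster point is a common fixed point of the $T_j$, i.e.\ a saddle point by Lemma~\ref{fixpoints}, and your Opial step then finishes the proof exactly as in the paper.
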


%\newpage
%%%%%%%%%%%%%%%%%%%%%%%%%%%%%%%%%%%%%%%%%%%%%%%%%%%%%%
\section{Sketch of the Proof} \label{sec:ske}

%%%%%%%%%%%%
The following results lay out the proof of Theorem~\ref{main}. The complete proof is detailed in Section \ref{sec:conv}.
We use the notation $\|x\|^2_T=\langle Tx,x\rangle$, as well as the block diagonal operators $Q,S:Y\to Y$ given by $Q=\mathrm{diag}(p_1^{-1},...,p_n^{-1})$ and $S=\mathrm{diag}(\sigma_1,...,\sigma_n)$. The conditional expectation at time $k+1$ is denoted, for any functional $\varphi$, by
$ \mathbb{E}^{k+1}(\varphi(w^{k+1})) = \mathbb{E}(\varphi(w^{k+1}) | w^k) .$

%%%%%%%%%%%%
The proof of Theorem \ref{main} uses the following important inequality from SPDHG, which is a consequence of (\cite{spdhg}, Lemma~4.4). This inequality is best summarized in (\cite{alacaoglu}, Lemma~4.1), which we have further simplified by using the fact that Bregman distances of convex functionals are nonnegative (\cite{spdhg}, Section~4).

%%%%%%%%%%%%
\begin{lemma}[\cite{alacaoglu}, Lemma 4.1]
\label{ala}
Let $(w^k)_{k\in\mathbb{N}}$ be a random sequence generated by Algorithm~\ref{alg_spdhg} under Assumption~\ref{assu}. Then for every saddle point $\hat{w}$, 
\begin{equation} \label{inequality0}
\begin{aligned}
    V^k(w^k-\hat{w}) \geq \mathbb{E}^{k+1}(V^{k+1}(w^{k+1}-\hat{w})) + V(x^{k+1}-x^k,y^k-y^{k-1}) 
\end{aligned}
\end{equation}
where 
$V$ and $V^k$ are given by 
$V(x,y) = \|x\|^{2}_{\tau^{-1}} + 2\langle QAx,y \rangle + \|y\|^{2}_{QS^{-1}}$ and
$$ V^k(x,y) = \|x\|^{2}_{\tau^{-1}} - 2\langle QAx,y^k-y^{k-1} \rangle + \|y^k-y^{k-1}\|^2_{QS^{-1}} + \|y\|^{2}_{QS^{-1}}. $$
\end{lemma}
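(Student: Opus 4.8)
The plan is to derive the inequality directly from the variational characterization of the two proximal steps in Algorithm~\ref{alg_spdhg}, combined with the saddle-point optimality conditions, exactly in the spirit of the estimate behind (\cite{spdhg}, Lemma~4.4). First I would record the subgradient inequalities produced by the prox operators. From $x^{k+1}=\mathrm{prox}_{\tau g}(x^k-\tau\bar z^k)$ one gets $\tau^{-1}(x^k-\tau\bar z^k-x^{k+1})\in\partial g(x^{k+1})$, and from the update of the active block $y_{j^k}^{k+1}=\mathrm{prox}_{\sigma_{j^k}f_{j^k}^*}(y_{j^k}^k+\sigma_{j^k}A_{j^k}x^{k+1})$ one gets the analogous inclusion for $f_{j^k}^*$. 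Testing these against $\hat x$ and $\hat y$ and using the saddle-point conditions $-A^T\hat y\in\partial g(\hat x)$ and $A_i\hat x\in\partial f_i^*(\hat y_i)$ turns each inclusion into an inequality whose slack is a Bregman distance of $g$ or $f_i^*$; since these are nonnegative for convex functionals, they may be discarded to reach the simplified form stated in Lemma~\ref{ala}.

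Next I would convert every inner product of the form $\langle a-b,a-c\rangle$ arising from the prox inequalities into squared-norm differences via the identity $2\langle a-b,a-c\rangle=\|a-b\|^2+\|a-c\|^2-\|b-c\|^2$, measured in the metrics $\tau^{-1}$ and $S^{-1}$. This is the step that manufactures the three families of quadratic terms appearing in $V^k$ and $V$: the primal distance $\|x^{k+1}-\hat x\|^2_{\tau^{-1}}$, the dual distance $\|y^{k+1}-\hat y\|^2_{S^{-1}}$, and the consecutive-iterate gap collected in $V(x^{k+1}-x^k,y^k-y^{k-1})$. The mixed term $2\langle QAx,y\rangle$ and the memory term $-2\langle QAx,y^k-y^{k-1}\rangle$ emerge from pairing the operator $A_{j^k}^T$ against the dual increments once the over-relaxed extrapolation $\bar z^{k+1}=z^{k+1}+p_{j^k}^{-1}\delta^k$ is expanded.

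The crux is the passage to the conditional expectation $\mathbb{E}^{k+1}$ over the random index $j^k$. Because only one dual block is touched per iteration, I would take $\mathbb{E}^{k+1}$ of the per-iteration identity and use that the expected active update $\mathbb{E}^{k+1}(A_{j^k}^T(\cdot))$ reproduces the full operator $A^T$ weighted by the probabilities $p_i$, which are absorbed into $Q=\mathrm{diag}(p_1^{-1},\dots,p_n^{-1})$: the factor $p_{j^k}^{-1}$ in $\bar z$ is precisely what makes the expected single-block step reproduce the deterministic extrapolation, so that the weighted norms $\|\cdot\|^2_{QS^{-1}}$ and weighted cross terms $\langle QAx,\cdot\rangle$ in $V^{k+1}$ come out with the correct coefficients. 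Reassembling the surviving quadratic terms into exactly $V^k(w^k-\hat w)$ on the left and $\mathbb{E}^{k+1}(V^{k+1}(w^{k+1}-\hat w))$ on the right completes the derivation.

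I expect the main obstacle to be this bookkeeping of the extrapolation and expectation: one must track the auxiliary variables $z,\bar z$ back to $A^Ty$ and the increment $y^k-y^{k-1}$, and verify that no stray cross term survives outside the intended $V^k$, $V^{k+1}$, and $V$ blocks. A secondary point is that the step-size condition~(\ref{assu_i}), $\tau\sigma_i\|A_i\|^2<p_i$, is what guarantees the residual gap $V(x^{k+1}-x^k,y^k-y^{k-1})$ is a genuinely nonnegative quadratic form, so that~(\ref{inequality0}) is a true descent inequality rather than a mere identity; checking positivity of this form under~(\ref{assu_i}) is a routine but essential estimate.
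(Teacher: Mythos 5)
Your proposal is correct and follows essentially the same route as the paper, which does not prove this lemma itself but imports it from (\cite{alacaoglu}, Lemma~4.1) --- in turn a consequence of (\cite{spdhg}, Lemma~4.4) --- simplified by discarding the nonnegative Bregman-distance terms. Your sketch reconstructs exactly that underlying argument: prox inclusions tested against the saddle-point optimality conditions, nonnegativity of the Bregman distances, polarization identities, and the $p_{j}^{-1}$-weighted conditional-expectation bookkeeping that turns single-block updates into the $Q$-weighted norms and cross terms.
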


%%%%%%%%%%%%
The following result is the central argument of our proof. It makes use of inequality~(\ref{inequality0}) and a classical result from Robbins \& Siegmund (Lemma~\ref{lemma2.2}) to establish an important convergence result. Its proof is detailed in Section~\ref{sec:conv}.

%%%%%%%%%%%%
\begin{proposition} \label{tozero1}
Let $(w^k)_{k\in\mathbb{N}}$ be a random sequence generated by Algorithm \ref{alg_spdhg} under Assumption \ref{assu} and let $\hat{w}$ be a saddle point. Then:
\begin{enumerate}[label=\roman*)]
    \item \label{i} The sequence $(w^k)_{k\in\mathbb{N}}$ is a.s.\@ bounded.
    \item \label{ii} The sequence $(V^k(w^k-\hat{w}))_{k\in\mathbb{N}}$ converges a.s.
    \item \label{iii} The sequence $(\|w^k-\hat{w}\|)_{k\in\mathbb{N}}$ converges a.s.
    \item \label{iv} If every cluster point of $(w^k)_{k\in\mathbb{N}}$ is a.s.\@ a saddle point, the sequence $(w^k)_{k\in\mathbb{N}}$ converges a.s.\@ to a saddle point.
\end{enumerate}
\end{proposition}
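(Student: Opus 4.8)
The plan is to recast the inequality of Lemma~\ref{ala} into the form required by the Robbins--Siegmund lemma (Lemma~\ref{lemma2.2}), for which the essential preliminary is to show that both $V^k(w^k-\hat w)$ and the descent term $V(x^{k+1}-x^k,y^k-y^{k-1})$ are nonnegative. I would first record the algebraic identity $V^k(x,y)=V(x,-(y^k-y^{k-1}))+\|y\|^2_{QS^{-1}}$, obtained by completing the square in the definition of $V^k$. The crucial observation is that the second argument appearing in $V^k$ (namely $-(y^k-y^{k-1})$) and in the descent term (namely $y^k-y^{k-1}$) is supported on a single block, since Algorithm~\ref{alg_spdhg} updates exactly one dual coordinate $j^{k-1}$ per step. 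Restricted to arguments $(x,d)$ with $d$ supported on block $j$, the quadratic form reduces to $\tau^{-1}\|x\|^2\pm 2p_j^{-1}\langle A_jx,d_j\rangle+p_j^{-1}\sigma_j^{-1}\|d_j\|^2$, whose matrix is positive definite precisely when $\tau\sigma_j\|A_j\|^2<p_j$, which is Assumption~\ref{assu_i} (the sign of the cross term being irrelevant to definiteness). Hence there is $\varepsilon>0$, a minimum over the finitely many blocks, with $V(x,-d)\ge\varepsilon(\|x\|^2+\|d\|^2)$ for every single-block $d$; in particular both quantities above are nonnegative, and $V^k(w^k-\hat w)\ge\varepsilon\|x^k-\hat x\|^2+\min_ip_i^{-1}\sigma_i^{-1}\|y^k-\hat y\|^2$.

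With nonnegativity in hand, I would apply Lemma~\ref{lemma2.2} to $V_k:=V^k(w^k-\hat w)$ with vanishing multiplicative and additive perturbations and $\zeta_k:=V(x^{k+1}-x^k,y^k-y^{k-1})$: the inequality $\mathbb{E}^{k+1}(V^{k+1}(w^{k+1}-\hat w))\le V^k(w^k-\hat w)-\zeta_k$ from Lemma~\ref{ala} immediately yields \ref{ii}, that $V^k(w^k-\hat w)$ converges a.s., together with $\sum_k\zeta_k<\infty$ a.s. The coercive lower bound from the previous step then gives \ref{i}: since the convergent sequence $V^k(w^k-\hat w)$ is bounded, so are $\|x^k-\hat x\|$ and $\|y^k-\hat y\|$, whence $(w^k)$ is a.s. bounded. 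Moreover, applying the single-block coercivity to the summable descent terms gives $\sum_k\|x^{k+1}-x^k\|^2<\infty$ and $\sum_k\|y^k-y^{k-1}\|^2<\infty$, so in particular $y^k-y^{k-1}\to0$ a.s.

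For \ref{iii} I would use the identity again, writing $V^k(w^k-\hat w)=\tau^{-1}\|x^k-\hat x\|^2+\|y^k-\hat y\|^2_{QS^{-1}}-2\langle QA(x^k-\hat x),y^k-y^{k-1}\rangle+\|y^k-y^{k-1}\|^2_{QS^{-1}}$. Because $y^k-y^{k-1}\to0$ while $(x^k)$ is bounded, the cross and quadratic-in-increment terms vanish a.s., so $\tau^{-1}\|x^k-\hat x\|^2+\|y^k-\hat y\|^2_{QS^{-1}}$ converges to the same limit as $V^k(w^k-\hat w)$. This is exactly the convergence of $\|w^k-\hat w\|$ in the weighted norm $\|(x,y)\|^2=\tau^{-1}\|x\|^2+\|y\|^2_{QS^{-1}}$, which establishes \ref{iii}.

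Finally, \ref{iv} is a stochastic Fej\'er/Opial argument. Since $(w^k)$ is a.s. bounded it admits a cluster point $w^\ast$, which by hypothesis is a.s. a saddle point; invoking \ref{iii} with $\hat w=w^\ast$ shows $\|w^k-w^\ast\|$ converges, and along the subsequence realizing $w^\ast$ this limit is $0$, forcing $w^k\to w^\ast$ a.s. The step requiring the most care --- and the main obstacle --- is making this conclusion hold almost surely for the \emph{random} limit $w^\ast$, whereas \ref{iii} is an a.s. statement for each \emph{fixed} saddle point. I would therefore first secure simultaneous convergence of $\|w^k-\hat w\|$ over a countable dense subset of the closed convex solution set, on a single almost sure event, and then pass to arbitrary saddle points by continuity. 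The other delicate point, already resolved above, is that neither $V$ nor $V^k$ is positive on all of $X\times Y$; their nonnegativity rests entirely on the single-block support of the dual increments, the structural feature the proof must exploit throughout.
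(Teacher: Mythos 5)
Your proposal is correct, and at the top level it has the same architecture as the paper's proof: the one-step inequality of Lemma~\ref{ala}, a Robbins--Siegmund argument for \ref{i}--\ref{iii}, and, for \ref{iv}, securing simultaneous convergence of $(\|w^k-w\|)_{k\in\mathbb{N}}$ over a countable dense subset of the closed solution set on a single almost sure event, followed by an Opial-type cluster-point argument --- exactly the content of the paper's Lemmas~\ref{tozero2} and~\ref{tozero3}, borrowed from \cite{combettesPesquet}. Where you genuinely differ is in how nonnegativity is obtained, and your route is cleaner. The paper never uses pathwise positivity of $V$ or $V^k$: it imports coercivity only in conditional expectation (Lemma~\ref{lemma4.2}, quoted from \cite{spdhg}), proves \ref{i} first by telescoping full expectations, and then, to run Robbins--Siegmund, shifts $\Delta^k=V^k(w^k-\hat w)$ by a constant $\hat M$ so that $\Delta^k+\hat M\geq 0$ and carries the cross terms as summable perturbations $\eta_k$. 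Your observation --- that $y^k-y^{k-1}$ is supported on the single block $j^{k-1}$, so that $V$ restricted to such pairs is governed by a $2\times 2$ matrix which is positive definite exactly under the step-size condition~(\ref{assu_i}) --- makes $\Delta^k$ and the descent terms nonnegative pathwise. Robbins--Siegmund then applies with zero perturbation, \ref{ii} comes first and \ref{i} falls out of the coercive lower bound on $\Delta^k$ (the reverse of the paper's order), and the shift $\hat M$ is avoided altogether. This is more than cosmetic: the paper's $\hat M$ is path-dependent, hence not $\mathcal{F}_k$-measurable as the lemma formally requires, so your variant sidesteps a measurability wrinkle that the paper glosses over.

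Two caveats. First, you read off both the convergence of $\Delta^k$ and the summability $\sum_k\zeta_k<\infty$ from Lemma~\ref{lemma2.2}, but the simplified statement given in the paper has no descent term and does not deliver that summability conclusion. You need either the full Robbins--Siegmund theorem (the cited Theorem~1 of \cite{robbins1971} does contain the $\sum_k\zeta_k<\infty$ conclusion) or the paper's fallback: take full expectations in the one-step inequality, telescope, and use $\mathbb{E}(\Delta^N)\geq 0$ together with monotone convergence to conclude $\sum_k\zeta_k<\infty$ almost surely. Second, in \ref{iii} you establish convergence of $\tau^{-1}\|x^k-\hat x\|^2+\|y^k-\hat y\|^2_{QS^{-1}}$ and declare \ref{iii} proved; convergence of distances in an equivalent norm does not in general imply convergence of distances in the original norm (the paper commits the same sleight of hand). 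The gap is harmless here, because the weighted norm is itself Hilbertian and \ref{iv}, including Lemmas~\ref{tozero2} and~\ref{tozero3}, can be run entirely in that norm --- but this should be said explicitly rather than left to norm equivalence.
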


%%%%%%%
Lastly, we prove that every cluster point of $(w^k)_{k\in\mathbb{N}}$ is almost surely a solution to (\ref{saddle}). This is also explained in Section~\ref{sec:conv}.

%%%%%%% cluster saddle
\begin{proposition} \label{cluster_saddle}
Let $(w^k)_{k\in\mathbb{N}}$ be a random sequence generated by Algorithm~\ref{alg_spdhg} under Assumption~\ref{assu}. Then every cluster point of $(w^k)_{k\in\mathbb{N}}$ is almost surely a saddle point.
\end{proposition}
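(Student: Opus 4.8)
The plan is to characterize a saddle point of~(\ref{saddle}) by its first‑order (subdifferential) conditions and to verify these at an arbitrary cluster point by passing to the limit in the proximal steps of Algorithm~\ref{alg_spdhg}. Writing out the primal update gives $\tau^{-1}(x^{k}-x^{k+1})-\bar z^{k}\in\partial g(x^{k+1})$, and the selected dual update gives $A_{j^k}x^{k+1}+\sigma_{j^k}^{-1}(y_{j^k}^{k}-y_{j^k}^{k+1})\in\partial f_{j^k}^{*}(y_{j^k}^{k+1})$. Since $g$ and the $f_i^{*}$ are proper, convex and lower‑semicontinuous, the graphs of $\partial g$ and $\partial f_i^{*}$ are closed, so it suffices to take limits in such inclusions along a convergent subsequence, provided the residual terms vanish and $\bar z^{k}$ is recognized as $A^{T}y^{k}$ in the limit. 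Throughout I work with the natural filtration $\mathcal{F}^{k}=\sigma(j^{0},\dots,j^{k-1})$, for which $\mathbb{E}^{k+1}[\,\cdot\,]=\mathbb{E}[\,\cdot\mid\mathcal{F}^{k}]$ and $x^{k+1}$ is $\mathcal{F}^{k}$‑measurable (the update of $x^{k+1}$ does not use $j^{k}$).

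First I would show that the increments vanish. Applying the Robbins–Siegmund lemma (Lemma~\ref{lemma2.2}) to~(\ref{inequality0})—where the $\mathcal{F}^{k}$‑measurable term $V(x^{k+1}-x^{k},y^{k}-y^{k-1})$ plays the role of the summable contribution—yields $\sum_{k}V(x^{k+1}-x^{k},y^{k}-y^{k-1})<\infty$ almost surely; as Assumption~\ref{assu} renders $V$ positive definite, this gives $\sum_{k}\big(\|x^{k+1}-x^{k}\|^{2}+\|y^{k+1}-y^{k}\|^{2}\big)<\infty$ and in particular $x^{k+1}-x^{k}\to0$ and $y^{k+1}-y^{k}\to0$ a.s. A telescoping identity shows $z^{k}=A^{T}y^{k}$ and $\bar z^{k}=A^{T}y^{k}+p_{j^{k-1}}^{-1}A_{j^{k-1}}^{T}(y^{k}-y^{k-1})_{j^{k-1}}$, so $\bar z^{k}-A^{T}y^{k}\to0$. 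Hence, on a subsequence $w^{k_l}\to w^{*}=(x^{*},y^{*})$, one has $x^{k_l+1}\to x^{*}$ and $\bar z^{k_l}\to A^{T}y^{*}$, and the primal inclusion passes to the limit to give $-A^{T}y^{*}\in\partial g(x^{*})$.

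The main obstacle is the dual condition, since at step $k$ only the single block $j^{k}$ is updated while the subsequence $(k_l)$ need not touch a prescribed block $i$. To overcome this I would introduce, for every $i$, the hypothetical full update $\tilde y_i^{k+1}:=\mathrm{prox}_{\sigma_i f_i^{*}}(y_i^{k}+\sigma_i A_i x^{k+1})$, which is $\mathcal{F}^{k}$‑measurable and satisfies $A_i x^{k+1}+\sigma_i^{-1}(y_i^{k}-\tilde y_i^{k+1})\in\partial f_i^{*}(\tilde y_i^{k+1})$ for \emph{every} $i$. Because $y_i^{k+1}-y_i^{k}=\mathbf{1}[j^{k}=i]\,(\tilde y_i^{k+1}-y_i^{k})$ and $j^{k}$ is drawn independently of $\mathcal{F}^{k}$ with $\mathbb{P}(j^{k}=i)=p_i$, we obtain the identity $\mathbb{E}^{k+1}\|y^{k+1}-y^{k}\|^{2}=\sum_i p_i\,\|\tilde y_i^{k+1}-y_i^{k}\|^{2}$.

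The crucial probabilistic step is then to transfer the realized summability $\sum_{k}\|y^{k+1}-y^{k}\|^{2}<\infty$ to its conditional counterpart: by the standard fact that, for a nonnegative adapted sequence $(X_k)$, the events $\{\sum_{k}X_k<\infty\}$ and $\{\sum_{k}\mathbb{E}[X_k\mid\mathcal{F}^{k}]<\infty\}$ coincide almost surely, we deduce $\sum_{k}\sum_i p_i\,\|\tilde y_i^{k+1}-y_i^{k}\|^{2}<\infty$ a.s. Since each $p_i>0$ is constant, this gives $\tilde y_i^{k+1}-y_i^{k}\to0$ a.s. for every $i$ simultaneously—and, decisively, along \emph{all} indices $k$ rather than only along the update times of block $i$. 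Consequently $\tilde y_i^{k_l+1}=y_i^{k_l}+(\tilde y_i^{k_l+1}-y_i^{k_l})\to y_i^{*}$, and passing to the limit in the inclusion above, closedness of $\partial f_i^{*}$ yields $A_i x^{*}\in\partial f_i^{*}(y_i^{*})$ for every $i$. Together with $-A^{T}y^{*}\in\partial g(x^{*})$, these are exactly the first‑order conditions for $w^{*}$ to be a saddle point of the convex–concave objective in~(\ref{saddle}), which completes the argument. I expect this conditional‑summability transfer to be the delicate part, as it is precisely what converts the per‑iteration single‑block stochastic update into a deterministic statement valid for every block at once.
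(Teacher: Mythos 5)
Your route is genuinely different from the paper's. The paper (Lemma~\ref{fixpoints}) encodes the iteration as a random sequence of continuous operators $T_j$ on $X\times Y$ whose common fixed points are exactly the saddle points, shows $T_jz^k-z^k\to0$ a.s.\ for every $j$ by conditioning on the sampling, and concludes by continuity of $T_j$ along a convergent subsequence. You instead verify the first-order conditions $-A^Ty^*\in\partial g(x^*)$ and $A_ix^*\in\partial f_i^*(y_i^*)$ via closedness of the subdifferential graphs, with your ``virtual'' full updates $\tilde y_i^{k+1}$ playing precisely the role of the $i$-th block of $T_i(x^{k+1},y^k)$. The two arguments are equivalent in content, and both hinge on the same point: showing $\tilde y_i^{k+1}-y_i^k\to0$ a.s.\ for \emph{every} block $i$, not just the sampled one.

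It is exactly at this point that your proof has a genuine gap. The ``standard fact'' you invoke --- that for a nonnegative adapted sequence $(X_k)$ the events $\{\sum_kX_k<\infty\}$ and $\{\sum_k\mathbb{E}[X_k\mid\mathcal{F}^k]<\infty\}$ coincide a.s. --- is false in the direction you need. In general only the inclusion $\{\sum_k\mathbb{E}[X_k\mid\mathcal{F}^k]<\infty\}\subset\{\sum_kX_k<\infty\}$ holds up to null sets; the reverse implication requires extra hypotheses, e.g.\ a uniform (or at least predictable, after localization) bound on the $X_k$. Counterexample for your direction: take $X_k$ independent with $\mathbb{P}(X_k=k^2)=k^{-2}$ and $X_k=0$ otherwise; then $\sum_kX_k<\infty$ a.s.\ by Borel--Cantelli, while $\mathbb{E}[X_k\mid\mathcal{F}^k]=\mathbb{E}[X_k]=1$, so the conditional sums diverge surely. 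Your $X_k=\|y^{k+1}-y^k\|^2$ carries no deterministic bound, so the fact cannot be applied as stated.

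The gap is repairable, and the repair is essentially what the paper does: work with summability in expectation rather than realized summability. Taking full expectations in (\ref{inequality0}) and telescoping, as in (\ref{sum}), yields (\ref{ntozero}); then, by the tower property, your identity $\mathbb{E}^{k+1}\|y^{k+1}-y^k\|^2=\sum_ip_i\|\tilde y_i^{k+1}-y_i^k\|^2$, and Tonelli, one gets $\mathbb{E}\big(\sum_k\sum_ip_i\|\tilde y_i^{k+1}-y_i^k\|^2\big)=\sum_k\mathbb{E}\|y^{k+1}-y^k\|^2<\infty$ (up to the constants of norm equivalence), hence $\tilde y_i^{k+1}-y_i^k\to0$ a.s.\ for every $i$. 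This bypasses the false transfer and also the Robbins--Siegmund step: note that Lemma~\ref{lemma2.2} as stated does not contain the conclusion $\sum_k\zeta_k<\infty$ for the subtracted nonnegative terms, so your first step already needs the full Robbins--Siegmund theorem rather than the paper's version. (Alternatively, your transfer can be salvaged by localization, since $\|y^{k+1}-y^k\|^2\le\max_i\|\tilde y_i^{k+1}-y_i^k\|^2$ is a predictable bound which is a.s.\ finite by Proposition~\ref{tozero1}~\ref{i}, but the expectation route is shorter.) One last caveat: Assumption~\ref{assu} makes $V$ positive definite only on vectors whose $y$-part is supported on a single block; this suffices here because $y^k-y^{k-1}$ has that structure, but it is worth stating. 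The remainder of your argument --- the identity $z^k=A^Ty^k$, $\bar z^k-A^Ty^k\to0$, and the passage to the limit in the two inclusions --- is correct.
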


%%%%%%%
\begin{proof}[Proof of Theorem \ref{main}]
By Proposition \ref{cluster_saddle}, every cluster point of $(w^k)_{k\in\mathbb{N}}$ is almost surely a saddle point and, by Proposition~\ref{tozero1} \ref{iv}, the sequence $(w^k)_{k\in\mathbb{N}}$ converges almost surely to a saddle point.
\end{proof}

%\newpage
%%%%%%%%%%%%%%%%%%%%%%%%%%%%%%%%%%%%%%%%%%%%%%%%%%%%%%
\section{Proof of Convergence} \label{sec:conv}

This section contains detailed proofs for our two main arguments, Propositions~\ref{tozero1} and \ref{cluster_saddle}. The proof of Proposition~\ref{tozero1} follows a similar strategy to that of Combettes \& Pesquet in (\cite{combettesPesquet}, Proposition 2.3), and we have divided it into three sections. 

\subsection{Proof of Proposition \ref{tozero1} \ref{i}}
To show this first part, we borrow the following lemma from \cite{spdhg} (Lemma~4.2 with $c=1$, $v_i = \tau\sigma_i\|A_i\|^2$ and $\gamma^2=\max_i\frac{v_i}{p_i}$):

%%%%%%%lemma_4.2
\begin{lemma}[\cite{spdhg}, Lemma 4.2]
\label{lemma4.2}
Let $p_i^{-1}\tau\sigma_i\|A_i\|^2 \leq \gamma^2<1$ for every $i$ and let $y^k$ be defined as in Algorithm \ref{alg_spdhg}. Then for every $x\in X$,
\begin{equation*}
    \mathbb{E}^k(V(x,y^k-y^{k-1})) \geq (1-\gamma)\mathbb{E}^k \big(\|x\|^{2}_{\tau^{-1}} + \|y^k-y^{k-1}\|^{2}_{QS^{-1}} \big) .  
\end{equation*}
\end{lemma}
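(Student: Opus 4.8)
The plan is to prove the inequality \emph{pathwise}, for each realization of the random index, and then pass to the conditional expectation, which renders the stated bound immediate. The key observation is that the increment $\Delta := y^k - y^{k-1}$ has at most one nonzero block: since Algorithm~\ref{alg_spdhg} updates only the coordinate $j^{k-1}$ in passing from $y^{k-1}$ to $y^k$, we have $\Delta_i = 0$ for all $i \neq j^{k-1}$. Writing $j := j^{k-1}$ and using the block-diagonal structure of $Q = \mathrm{diag}(p_1^{-1},\dots,p_n^{-1})$ and $S = \mathrm{diag}(\sigma_1,\dots,\sigma_n)$, the definition of $V$ collapses to
\[ V(x,\Delta) = \|x\|^2_{\tau^{-1}} + 2p_j^{-1}\langle A_j x, \Delta_j\rangle + p_j^{-1}\sigma_j^{-1}\|\Delta_j\|^2, \]
because both the cross term $2\langle QAx,\Delta\rangle$ and the norm $\|\Delta\|^2_{QS^{-1}}$ retain only their $j$-th block.

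I would then bound the indefinite cross term from below. By Cauchy--Schwarz together with $\|A_j x\| \leq \|A_j\|\,\|x\|$, and Young's inequality with a free parameter $a>0$,
\[ 2p_j^{-1}\langle A_j x, \Delta_j\rangle \geq -p_j^{-1}\bigl(a\,\|A_j\|^2\|x\|^2 + a^{-1}\|\Delta_j\|^2\bigr). \]
The crux is to choose $a$ so that the first term is absorbed into $\|x\|^2_{\tau^{-1}}$ and the second into $\|\Delta_j\|^2_{QS^{-1}}$, each up to a factor $\gamma$. Taking $a = \sigma_j/\gamma$ makes the second term exactly $\gamma\, p_j^{-1}\sigma_j^{-1}\|\Delta_j\|^2$, while the first equals $\gamma^{-1} p_j^{-1}\tau\sigma_j\|A_j\|^2 \cdot \tau^{-1}\|x\|^2$; the step-size hypothesis $p_j^{-1}\tau\sigma_j\|A_j\|^2 \leq \gamma^2$ is precisely what forces this coefficient to be at most $\gamma$.

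Substituting these estimates back gives, for every realization with $j^{k-1}=j$,
\[ V(x,y^k-y^{k-1}) \geq (1-\gamma)\bigl(\|x\|^2_{\tau^{-1}} + p_j^{-1}\sigma_j^{-1}\|\Delta_j\|^2\bigr) = (1-\gamma)\bigl(\|x\|^2_{\tau^{-1}} + \|y^k-y^{k-1}\|^2_{QS^{-1}}\bigr), \]
the last identity once more using that only the $j$-th block of $\Delta$ survives. Since this holds for each possible value of $j^{k-1}$, it holds almost surely, and applying $\mathbb{E}^k$ to both sides yields the lemma. I expect the only delicate point to be the bookkeeping that links the single-coordinate pathwise estimate to the admissible range $\sigma_j/\gamma \leq a \leq \gamma p_j/(\tau\|A_j\|^2)$ for the Young parameter: this interval is nonempty exactly under Assumption~\ref{assu}, so the role of the step-size condition here is to make the absorption feasible rather than to enter any probabilistic averaging.
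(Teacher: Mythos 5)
Your proof is correct. Note that the paper itself does not prove this lemma: it is imported verbatim from Chambolle et al.\ (the reference \cite{spdhg}, Lemma 4.2, instantiated with $c=1$, $v_i=\tau\sigma_i\|A_i\|^2$, $\gamma^2=\max_i v_i/p_i$), so there is no in-paper argument to compare against; what you have written is a self-contained reconstruction. The mechanics check out at every step: the increment $y^k-y^{k-1}$ indeed has at most one nonzero block (coordinate $j^{k-1}$), the block-diagonal structure of $Q$ and $S$ collapses $V$ exactly as you write, and with the Young parameter $a=\sigma_j/\gamma$ the cross term is absorbed with coefficient $\gamma^{-1}p_j^{-1}\tau\sigma_j\|A_j\|^2\le\gamma$ on the $\|x\|^2_{\tau^{-1}}$ side and exactly $\gamma$ on the $\|\Delta_j\|^2_{QS^{-1}}$ side — the crucial cancellation being that the $p_j^{-1}$ in front of the cross term is matched by the $p_j$ in the step-size hypothesis. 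Two remarks. First, your argument actually proves something slightly stronger than the statement: the inequality holds pathwise (almost surely), so the conditional expectation $\mathbb{E}^k$ plays no role beyond monotonicity; this is also why the lemma can legitimately be applied in the paper with the random argument $x=x^{k+1}-x^k$, a point the statement's phrasing ``for every $x\in X$'' glosses over and your pathwise formulation handles cleanly. Second, a small imprecision: the nonemptiness of your admissible interval $[\sigma_j/\gamma,\;\gamma p_j/(\tau\|A_j\|^2)]$ is equivalent to the lemma's hypothesis $p_j^{-1}\tau\sigma_j\|A_j\|^2\le\gamma^2$, not to Assumption~\ref{assu} itself (the assumption gives strict inequality with $p_i$ on the right, from which the lemma's hypothesis follows for the specific choice of $\gamma$); this does not affect the validity of the proof.
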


% %%%%%%proof 
% \begin{proof} Take (\cite{spdhg}, Lemma~4.2) with $c=1$, $v_i = \tau\sigma_i\|A_i\|^2$ and $\gamma=\max_i\frac{v_i}{p_i}$. 
% \end{proof}

%%%%%%%proof of (i)
\begin{proof}[Proof of Proposition \ref{tozero1} \ref{i}]
By (\cite{alacaoglu}, Lemma~4.1), for any
saddle point $\hat{w}$ we have
\begin{equation} \label{inequality}
    \Delta^k \geq \mathbb{E}^{k+1}(\Delta^{k+1}) + V(x^{k+1}-x^k,y^k-y^{k-1}) 
\end{equation}
where $\Delta^k = V^k(w^k-\hat{w})$.
By Lemma \ref{lemma4.2}, 
\begin{equation*}
    \mathbb{E}^k(V(x^{k+1}-x^k,y^k-y^{k-1})) \geq  (1-\gamma)\mathbb{E}^k \Big\{\|x^{k+1}-x^k\|^{2}_{\tau^{-1}} + \|y^k-y^{k-1}\|^{2}_{QS^{-1}} \Big\}.  
\end{equation*}
Hence, taking the full expectation in (\ref{inequality}) yields
\begin{equation*} 
    \mathbb{E}(\Delta^k) \geq \mathbb{E}(\Delta^{k+1}) 
    + (1-\gamma)\mathbb{E}\big(\|x^{k+1}-x^{k}\|^{2}_{\tau^{-1}} + \|y^{k}-y^{k-1}\|^{2}_{QS^{-1}}\big) .
\end{equation*}
Taking the sum from $k=0$ to $k=N-1$ gives 
\begin{equation} \label{sum}
    \Delta^0 \geq \mathbb{E}(\Delta^{N}) 
    + (1-\gamma)\mathbb{E}\Big\{\sum_{k=0}^{N-1} \|x^{k+1}-x^{k}\|^{2}_{\tau^{-1}} + \|y^{k}-y^{k-1}\|^{2}_{QS^{-1}}\Big\} 
\end{equation}
where $y^{-1}=y^0$. This implies $\Delta^0 \geq \mathbb{E}(\Delta^{N})$ and, by Lemma~\ref{lemma4.2} we have 
\begin{equation}\label{E(Delta^N)}
    \mathbb{E}(\Delta^N) \geq \mathbb{E}\big\{(1-\gamma)(\|x^N-\hat{x}\|^2_{\tau^{-1}} + \|y^N-y^{N-1}\|^2_{QS^{-1}}) + \|y^N-\hat{y}\|^2_{QS^{-1}}\big\}.
\end{equation}
It follows that $ \Delta^0 \geq (1-\gamma)\|x^N-\hat{x}\|^2_{\tau^{-1}} + \|y^N-\hat{y}\|^2_{QS^{-1}} $ a.s., from where it is clear that the sequence $(w^N)_{N\in\mathbb{N}}$ is bounded almost surely.
\end{proof}

%%%%%%% proof of (ii)-(iii)
\subsection{Proof of Proposition \ref{tozero1} \ref{ii}-\ref{iii}}
As in (\cite{combettesPesquet}, Proposition 2.3), we use a classical result from Robbins \& Siegmund:

%%%%%%%%%%% lemma 2.2
\begin{lemma}[\cite{robbins1971}, Theorem 1] 
\label{lemma2.2}
Let $\mathcal{F}_k$ be a sequence of sub-$\sigma$-algebras such that $\mathcal{F}_k\subset \mathcal{F}_{k+1}$ for every $k$, and let $\alpha_k$, $\eta_k$ be nonnegative $\mathcal{F}_k$-measurable random variables such that $\sum_{k=1}^\infty \eta_k <\infty$ almost surely and
$$ \mathbb{E}(\alpha_{k+1}\,|\,\mathcal{F}_k) \,\leq\, \alpha_k + \eta_k \;\text{ a.s.}$$
for every $k$. Then $\alpha_k$ converges almost surely to a random variable in $[0,\infty)$.
\end{lemma}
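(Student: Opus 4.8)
The plan is to recognize this as the standard ``almost supermartingale'' device and reduce it to the classical nonnegative supermartingale convergence theorem of Doob by means of a localization argument. First I would record the partial sums $S_k := \sum_{j=1}^{k-1}\eta_j$ (so $S_1=0$), and observe that $S_{k+1}=S_k+\eta_k$ is $\mathcal{F}_k$-measurable since $\eta_k$ is, and that $S_k\uparrow S_\infty:=\sum_{j=1}^\infty\eta_j$, which is finite almost surely by hypothesis. Subtracting these sums from $\alpha_k$ should absorb the error term: setting $Y_k := \alpha_k - S_k$ and using that $S_{k+1}$ is $\mathcal{F}_k$-measurable, the assumed inequality gives $\mathbb{E}(Y_{k+1}\,|\,\mathcal{F}_k) \leq \alpha_k + \eta_k - S_{k+1} = \alpha_k - S_k = Y_k$, so $(Y_k)$ is a supermartingale.

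The main obstacle is that $(Y_k)$ need not be nonnegative nor bounded in $L^1$: it is bounded below only by $-S_\infty$, and since $\eta_k$ is merely summable almost surely (not in expectation), $S_\infty$ may fail to be integrable, so Doob's theorem does not apply to $(Y_k)$ directly. I would circumvent this by localizing. For each $m\in\mathbb{N}$ define the stopping time $\tau_m := \inf\{k : S_{k+1} > m\}$; since $\{\tau_m > k\} = \{S_{k+1}\leq m\} \in \mathcal{F}_k$, this is indeed a stopping time for $(\mathcal{F}_k)$. By optional stopping the halted process $(Y_{k\wedge\tau_m})$ remains a supermartingale, and by the definition of $\tau_m$ one has $S_{k\wedge\tau_m}\leq m$, whence $Y_{k\wedge\tau_m}+m = \alpha_{k\wedge\tau_m} - S_{k\wedge\tau_m} + m \geq 0$ because $\alpha_{k\wedge\tau_m}\geq 0$. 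Thus $(Y_{k\wedge\tau_m}+m)_k$ is a nonnegative supermartingale, and Doob's convergence theorem yields that it, and hence $(Y_{k\wedge\tau_m})_k$, converges almost surely to a finite limit.

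Finally I would patch the localized conclusions together. On the event $\{\tau_m=\infty\} = \{S_\infty\leq m\}$ the stopped process coincides with $(Y_k)$, so $Y_k$ converges almost surely there; moreover $S_k\to S_\infty\leq m<\infty$ on this event, so $\alpha_k = Y_k + S_k$ converges almost surely on $\{\tau_m=\infty\}$. Since $\bigcup_{m\in\mathbb{N}}\{\tau_m=\infty\} = \{S_\infty<\infty\}$ has probability one, $(\alpha_k)$ converges almost surely, and as each $\alpha_k\geq 0$ the limit lies in $[0,\infty)$. The only remaining point to verify carefully is integrability: assuming each $\alpha_k$ is integrable (implicit in the conditional-expectation hypothesis), the truncation at level $m$ keeps each $Y_{k\wedge\tau_m}$ integrable, so the supermartingale manipulations above are legitimate.
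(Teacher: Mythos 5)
The paper does not prove this lemma at all: it is imported as an external classical result (Robbins \& Siegmund 1971, Theorem 1) and used as a black box in the proof of Proposition~\ref{tozero1}~\ref{ii}--\ref{iii}, so there is no internal proof to compare yours against. Judged on its own, your argument is the standard proof of this special case of the Robbins--Siegmund theorem, and its architecture is sound: compensation $Y_k=\alpha_k-S_k$, localization by the stopping times $\tau_m=\inf\{k:S_{k+1}>m\}$ (correctly shown to be stopping times because $S_{k+1}$ is $\mathcal{F}_k$-measurable), Doob's convergence theorem for the stopped process bounded below by $-m$, and exhaustion of $\{S_\infty<\infty\}$ by the events $\{\tau_m=\infty\}$. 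Two integrability points need tightening, though neither damages this structure. First, because the $\eta_j$ are only almost surely summable and need not be integrable, $S_k$ and hence $Y_k$ can fail to lie in $L^1$ even when every $\alpha_k$ does; so the assertion that $(Y_k)$ is a supermartingale is not literally justified, and the optional stopping theorem cannot be invoked as a black box. The fix is to prove the supermartingale property of the stopped process directly: $Y_{(k+1)\wedge\tau_m}-Y_{k\wedge\tau_m}=\mathbf{1}_{\{\tau_m>k\}}(Y_{k+1}-Y_k)$ with $\{\tau_m>k\}\in\mathcal{F}_k$, and the hypothesis gives $\mathbb{E}\bigl(\mathbf{1}_{\{\tau_m>k\}}(Y_{k+1}-Y_k)\,\big|\,\mathcal{F}_k\bigr)\le 0$; integrability of $Y_{k\wedge\tau_m}$ then follows by induction from the lower bound $-m$ together with $\mathbb{E}(Y_{k\wedge\tau_m})\le\mathbb{E}(\alpha_1)$. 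Second, your closing caveat that integrability of the $\alpha_k$ is ``implicit in the conditional-expectation hypothesis'' is not quite right: conditional expectations of nonnegative random variables are always well defined (possibly infinite), so the lemma as stated, and as the paper uses it, does not assume $\alpha_k\in L^1$. This too is repairable by one further localization, restricting to the events $\{\alpha_1\le c\}\in\mathcal{F}_1$ and letting $c\to\infty$, which makes your proof valid under exactly the stated hypotheses.
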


%%%%%%% proof of (ii), (iii)
\begin{proof}[Proof of Proposition \ref{tozero1} \ref{ii}-\ref{iii}]
From (\ref{E(Delta^N)}) we have $\mathbb{E}(\Delta^N)\geq0$. Thus taking the limit as $N\to\infty$ in (\ref{sum}) yields
\begin{equation} \label{ntozero}
     \mathbb{E}\Big\{\sum_{k=0}^\infty \|x^{k+1}-x^{k}\|^{2}_{\tau^{-1}} + \|y^{k}-y^{k-1}\|^{2}_{QS^{-1}}\Big\} < \infty
\end{equation}
which implies
\begin{equation}\label{sum(y^n)} 
    \sum_{k=0}^\infty \|x^{k+1}-x^{k}\|^{2}_{\tau^{-1}} + \|y^{k}-y^{k-1}\|^{2}_{QS^{-1}} < \infty \quad\text{a.s.}
\end{equation}
and, in particular,
\begin{equation}\label{yto0}
   \|y^k-y^{k-1}\|_{QS^{-1}}\to0 \quad\text{a.s.} 
\end{equation}
Since $(w^k)_{k\in\mathbb{N}}$ is bounded a.s., so is $(x^k)_{k\in\mathbb{N}}$ and, since the operators $Q$, $A$ and $S$ are also bounded, there exists $M>0$ such that, for every $k$,
\begin{equation*} \label{Q}
    |\langle QA(x^k-\hat{x}),y^{k}-y^{k-1} \rangle| \leq \|QA\|\|x^k-\hat{x}\|\|y^k-y^{k-1}\| \leq M\|y^k-y^{k-1}\|_{QS^{-1}} 
\end{equation*}
a.s. and therefore, by (\ref{yto0}),
\begin{equation} \label{qto0}
    \langle QA(x^k-\hat{x}),y^{k}-y^{k-1} \rangle \to 0 \quad \text{a.s.}
\end{equation}
The fact that $(w^k)_{k\in\mathbb{N}}$ is a.s.\@ bounded, together with (\ref{yto0}) and (\ref{qto0}) imply the sequence $(\Delta^k)_{k\in\mathbb{N}}$ is also a.s.\@ bounded. Thus there exists $\hat{M}\geq0$ such that $ \Delta^k+\hat{M} \geq 0 $ for every $k$. Let $\alpha_k = \Delta^k+\hat{M}$ and $\eta_k=2|\langle QA(x^{k+1}-x^{k}),y^{k}-y^{k-1} \rangle|$. From (\ref{inequality}) we deduce
\begin{equation} \label{supermartingale}
    \alpha_k + \eta_k \,\geq\, \mathbb{E}^{k+1}(\alpha_{k+1}) \quad\text{a.s. for every } k,
\end{equation}
where all the terms are nonnegative and,  for some $\tilde{M}>0$,
\begin{equation*}
\begin{aligned}
    \eta_k = 2|\langle QA(x^{k+1}-x^{k}),y^{k}-y^{k-1} \rangle| 
    &\leq 2\|Q A\| \|x^{k+1}-x^k\| \|y^k-y^{k-1}\| \\
    &\leq 2\tilde{M}  \|x^{k+1}-x^k\|_{\tau^{-1}} \|y^k-y^{k-1}\|_{QS^{-1}} \\
    &\leq \tilde{M} \big( \|x^{k+1}-x^k\|_{\tau^{-1}}^2 + \|y^k-y^{k-1}\|^2_{QS^{-1}} \big) 
\end{aligned}
\end{equation*}
which implies, by (\ref{sum(y^n)}), $\sum_{k=1}^\infty\eta_k<\infty$ a.s.. Thus (\ref{supermartingale}) satisfies all the assumptions of Lemma~\ref{lemma2.2} and it yields $\Delta^k \to \alpha$ a.s.
for some $\alpha\in[-\hat{M},\infty)$. Furthermore, from (\ref{yto0}) and (\ref{qto0}) we know some of the terms in $\Delta^k$ converge to 0 a.s., namely
$$ - 2\langle QA(x^{k}-\hat{x}),y^{k}-y^{k-1}\rangle
+ \|y^{k}-y^{k-1}\|^{2}_{QS^{-1}} \to 0 \quad\text{a.s.} $$
hence $ \|x^k-\hat{x}\|^2_{\tau^{-1}} + \|y^k-\hat{y}\|^2_{QS^{-1}} \to \alpha $ a.s.  
Finally, the norm $\|w\|^2_R := \|x\|^2_{\tau^{-1}} + \|y\|^2_{QS^{-1}} $ is equivalent to the norm in $X\times Y$. Since the sequence $(\|w^k-\hat{w}\|_R)_{k\in\mathbb{N}}$ converges a.s., so does $(\|w^k-\hat{w}\|)_{k\in\mathbb{N}}$.
\end{proof}

%%%%%% proof (iv)
\subsection{Proof of Proposition \ref{tozero1} \ref{iv}} 
The two following lemmas are consequence of (\cite{combettesPesquet}, Proposition 2.3) and their proofs are not included here. We use the standard notation $(\mathbf{\Omega},\mathcal{F},P)$ for the probability space corresponding to the random iterations $w^k$.

%%%%%%%%%%%% to zero2
\begin{lemma}[\cite{combettesPesquet}, Proposition 2.3 iii)]
\label{tozero2}
Let $\mathbf{F}$ be a closed subset of a separable Hilbert space and let $(w^k)_{k\in\mathbb{N}}$ be a sequence of random variables such that the sequence $(\|w^k-w\|)_{k\in\mathbb{N}}$ converges almost surely for every $w\in\mathbf{F}$.
Then there exists $\Omega\in\mathcal{F}$ such that $\mathbb{P}(\Omega)=1$ and the sequence $(\|w^k(\omega)-w\|)_{k\in\mathbb{N}}$ converges for all  $\omega\in\Omega$ and $w\in\mathbf{F}$.
\end{lemma}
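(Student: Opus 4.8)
The plan is to exploit separability in order to replace the uncountable family of almost-sure convergence events—one for each $w\in\mathbf{F}$—by a single full-measure event on which convergence holds simultaneously for \emph{every} $w\in\mathbf{F}$. The key structural fact is that any subset of a separable metric space is itself separable; since $\mathbf{F}$ is a closed subset of a separable Hilbert space, I may fix a countable dense set $D=\{w_m:m\in\mathbb{N}\}\subseteq\mathbf{F}$.

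First I would dispose of the countably many reference points by a standard patching argument. By hypothesis, for each $m$ there is an event $\Omega_m\in\mathcal{F}$ with $\mathbb{P}(\Omega_m)=1$ on which $(\|w^k-w_m\|)_{k\in\mathbb{N}}$ converges. Setting $\Omega=\bigcap_{m\in\mathbb{N}}\Omega_m$, countable subadditivity applied to the complements gives $\mathbb{P}(\Omega)=1$, and for every $\omega\in\Omega$ the sequence $(\|w^k(\omega)-w_m\|)_{k\in\mathbb{N}}$ converges for all $m$ at once.

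The crux is then to upgrade convergence on the dense set $D$ to convergence at an arbitrary $w\in\mathbf{F}$, pointwise on $\Omega$. Fixing $\omega\in\Omega$ and $w\in\mathbf{F}$, the reverse triangle inequality yields, for all $k$ and $m$,
\begin{equation*}
    \big| \|w^k(\omega)-w\| - \|w^k(\omega)-w_m\| \big| \leq \|w-w_m\|.
\end{equation*}
Passing to $\limsup_k$ and $\liminf_k$ and using that $\|w^k(\omega)-w_m\|$ converges, the oscillation of $\|w^k(\omega)-w\|$ is controlled by
\begin{equation*}
    \limsup_k \|w^k(\omega)-w\| - \liminf_k \|w^k(\omega)-w\| \leq 2\|w-w_m\|.
\end{equation*}
Since $D$ is dense in $\mathbf{F}$, the right-hand side is made arbitrarily small by choosing $w_m$ near $w$, forcing the $\limsup$ and $\liminf$ to coincide; hence $(\|w^k(\omega)-w\|)_{k\in\mathbb{N}}$ converges for every $\omega\in\Omega$ and every $w\in\mathbf{F}$, which is the claim.

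The main obstacle is conceptual rather than computational: one must recognise that the maps $w\mapsto\|w^k-w\|$ are $1$-Lipschitz in $w$ \emph{uniformly} in $k$, so that their oscillation in the index $k$ is controlled by proximity in $\mathbf{F}$. This uniform Lipschitz bound is precisely what allows density in $\mathbf{F}$ to propagate convergence from the countable set $D$ to all of $\mathbf{F}$. The measure-theoretic patching is routine once separability is invoked, and the argument does genuinely rely on separability of $\mathbf{F}$, which it inherits from the ambient space.
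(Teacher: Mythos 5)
Your proof is correct. The paper does not actually prove this lemma---it explicitly defers to (\cite{combettesPesquet}, Proposition 2.3 iii))---and your argument (extract a countable dense subset of $\mathbf{F}$ using separability of subsets of a separable metric space, intersect the countably many full-measure events, then use the estimate $\big|\,\|w^k(\omega)-w\|-\|w^k(\omega)-w_m\|\,\big|\le\|w-w_m\|$, uniform in $k$, to transfer convergence from the dense set to every $w\in\mathbf{F}$) is essentially the standard proof given in that reference, so it correctly supplies the argument the paper leaves to the citation.
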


%%%%%%%%%%%% to zero3
\begin{lemma}[\cite{combettesPesquet}, Proposition 2.3 iv)]
\label{tozero3}
Let $\mathbf{G}(w^k)$ be the set of cluster points of a random sequence $(w^k)_{k\in\mathbb{N}}$. Assume there exists ${\Omega}\in\mathcal{F}$ such that $\mathbb{P}({\Omega})=1$ and for every $\omega\in{\Omega}$, $\mathbf{G}(w^k(\omega))$ is nonempty and the sequence $(\|w^k(\omega)-w\|)_{k\in\mathbb{N}}$ converges for all $w\in\mathbf{G}(w^k(\omega))$.
Then $(w^k)_{k\in\mathbb{N}}$ converges almost surely to an element of $\mathbf{G}(w^k)$.
\end{lemma}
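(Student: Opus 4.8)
The plan is to reduce the almost sure statement to a deterministic convergence result holding pointwise on the probability-one event $\Omega$, and then to run a classical Opial-type uniqueness argument showing the cluster point is unique. First I would fix an arbitrary $\omega\in\Omega$ and work entirely with the deterministic sequence $u^k := w^k(\omega)$, whose cluster point set $\mathbf{G} := \mathbf{G}(w^k(\omega))$ is nonempty and satisfies, by hypothesis, that $\|u^k - w\|$ converges for every $w\in\mathbf{G}$. The first observation is that $(u^k)_{k\in\mathbb{N}}$ is bounded: picking any $w_0\in\mathbf{G}$, the sequence $\|u^k - w_0\|$ is convergent, hence bounded, so $\|u^k\|\le \|u^k - w_0\| + \|w_0\|$ is bounded too. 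In the finite-dimensional setting of the paper, Bolzano--Weierstrass then guarantees that $\mathbf{G}$ is indeed nonempty, consistent with the assumption.

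Next I would show that $\mathbf{G}$ is a singleton. Take $w_1,w_2\in\mathbf{G}$ and use the elementary identity
$$\|u^k - w_1\|^2 - \|u^k - w_2\|^2 = 2\langle u^k,\, w_2 - w_1\rangle + \|w_1\|^2 - \|w_2\|^2.$$
Both squared distances converge by hypothesis, so the left-hand side converges, and therefore $\langle u^k, w_2 - w_1\rangle$ converges to some limit $c$. Since $w_1$ and $w_2$ are cluster points, there are subsequences with $u^{k_j}\to w_1$ and $u^{m_j}\to w_2$; passing to the limit along each gives $\langle w_1, w_2-w_1\rangle = c = \langle w_2, w_2-w_1\rangle$, whence $\|w_2 - w_1\|^2 = \langle w_2 - w_1,\, w_2-w_1\rangle = 0$ and thus $w_1=w_2$.

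To conclude, a bounded sequence in a finite-dimensional space whose set of cluster points reduces to the single point $w^\ast\in\mathbf{G}$ must converge to $w^\ast$: otherwise a subsequence staying outside some fixed ball around $w^\ast$ would, by boundedness, admit a further convergent subsequence producing a second cluster point, a contradiction. Hence $u^k\to w^\ast(\omega)\in\mathbf{G}$. Since this holds for every $\omega\in\Omega$ with $\mathbb{P}(\Omega)=1$, the sequence $(w^k)_{k\in\mathbb{N}}$ converges almost surely to an element of $\mathbf{G}(w^k)$; the limit, being a pointwise almost sure limit of the measurable maps $w^k$, is itself a random variable.

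I expect the delicate point to be the uniqueness of the cluster point, specifically justifying that the convergence of $\langle u^k, w_2 - w_1\rangle$ follows cleanly from the two convergent distances and that the two subsequential limits may be extracted simultaneously. A further subtlety arises only if one wants the fully general separable, possibly infinite-dimensional, Hilbert space version as in Combettes \& Pesquet: there boundedness yields only \emph{weak} subsequential limits, so norm cluster points must be replaced by weak cluster points and the argument closed via the Opial property. In the paper's finite-dimensional setting this complication disappears and the norm-topology argument above suffices.
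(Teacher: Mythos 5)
Your proof is correct. Note that the paper does not actually prove this lemma---it is imported from (\cite{combettesPesquet}, Proposition~2.3) with the explicit remark that the proofs are not included---and your argument is exactly the standard Fej\'er/Opial-type argument underlying that citation: boundedness from a single convergent distance, uniqueness of the cluster point via the identity $\|u^k-w_1\|^2-\|u^k-w_2\|^2=2\langle u^k,\,w_2-w_1\rangle+\|w_1\|^2-\|w_2\|^2$ (where no simultaneous extraction is needed, since the full sequence $\langle u^k,w_2-w_1\rangle$ converges and subsequential limits along $u^{k_j}\to w_1$ and $u^{m_j}\to w_2$ must agree), and convergence of a bounded sequence with a unique cluster point---all valid in the paper's finite-dimensional setting, with your closing caveat about weak cluster points being precisely the adjustment Combettes \& Pesquet make in the general separable Hilbert space case.
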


%%%%%%%%% proof (iv)
\begin{proof}[Proof of Proposition \ref{tozero1} \ref{iv}] Let $\mathbf{F}$ be the set of solutions to the saddle point problem (\ref{saddle}). 
By Proposition \ref{tozero1} iii) and Lemma~\ref{tozero2}, there exists $\Omega\in\mathcal{F}$ such that the sequence $(\|w^k(\omega)-w\|)_{k\in\mathbb{N}}$ converges for every $w\in\mathbf{F}$ and $\omega\in\Omega$. This implies, since $\mathbf{F}$ is nonempty, that $(w^k(\omega))_{k\in\mathbb{N}}$ is bounded and thus $\mathbf{G}(w^k(\omega))$ is nonempty for all $\omega\in\Omega$. By assumption, there exists $\tilde{\Omega}\in\mathcal{F}$ such that
$\textbf{G}(w^k(\omega))\subset\mathbf{F}$ for every $\omega\in\tilde{\Omega}$. Let $\omega\in\Omega\cap\tilde{\Omega}$, then  $(\|w^k(\omega)-w\|)_{k\in\mathbb{N}}$ converges for every $w\in\textbf{G}(w^k(\omega))\neq\emptyset$. By Lemma~\ref{tozero3}, we get the result.
\end{proof}

%%%%%% proof Cluster Saddle
\subsection{Proof of Proposition \ref{cluster_saddle}}
The following lemma describes Algorithm~\ref{alg_spdhg} as a random sequence $(T_{j^k})_{k\in\mathbb{N}}$ of continuous operators on the primal-dual space $X\times Y$, such that the fixed points of the operators are saddle points of (\ref{saddle}):

%%%%%% lemma fix points
\begin{lemma}
\label{fixpoints}
Denote $w=(w_i)_{i=0}^n=(x,y_1,...,y_n)$ and for every $j\in \{1,...,n\}$ let the operator $T_j:X\times Y \to X\times Y$ be defined by
\begin{equation*}
\begin{aligned}
\hspace{20mm}
(T_jw)_0 &= \mathrm{prox}_{\tau g}\big(x-\tau A^Ty -  \big(1+\frac{1}{p_j}\big)  \tau A_j^T((T_jw)_j-y_j)\big) \\
(T_jw)_i &= \left\{ \begin{tabular}{ll}
    $\mathrm{prox}_{\sigma_i f_i^*}(y_i+\sigma_iA_ix)$ \quad & if $i=j$ \\
    $y_i$ & else 
\end{tabular} \right.
\hspace{9mm} \text{for } 1\leq i\leq n .
\end{aligned}
\end{equation*}  
Then the iterations $w^k$ generated by Algorithm \ref{alg_spdhg} satisfy
\begin{equation} \label{itTs}
    T_{j^k}(x^{k+1},y^{k}) = (x^{k+2},y^{k+1}) .
\end{equation}
Furthermore, $\hat{w}$ is a solution to the saddle point problem (\ref{saddle}) if and only if it is a fixed point of $T_{j}$ for each $j\in \{1,...,n\}$.
\end{lemma}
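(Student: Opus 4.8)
The plan is to prove the two assertions separately: first the iteration identity (\ref{itTs}), then the characterization of fixed points as saddle points.

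For the iteration identity, the key bookkeeping step is to identify the auxiliary variables $z^k$ and $\bar z^k$ explicitly. First I would observe that because only the $j^l$-th dual coordinate changes between steps $l$ and $l+1$, the increment satisfies $\delta^l = A_{j^l}^T(y_{j^l}^{l+1}-y_{j^l}^l) = A^T(y^{l+1}-y^l)$, where $A^T y = \sum_i A_i^T y_i$. Summing $z^{k+1}=z^k+\delta^k$ from the initialization $z^0=0$ and using $y^0=0$ then telescopes to $z^k = A^T y^k$. Substituting into $\bar z^{k+1}=z^{k+1}+p_{j^k}^{-1}\delta^k$ yields the extrapolated quantity
\begin{equation*}
    \bar z^{k+1} = A^T y^k + \bigl(1+\tfrac{1}{p_{j^k}}\bigr)\delta^k .
\end{equation*}
With this in hand I would verify (\ref{itTs}) block by block. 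Writing $w=(x^{k+1},y^k)$ and $j=j^k$, the dual block of $T_j w$ is $\mathrm{prox}_{\sigma_i f_i^*}(y_i^k+\sigma_i A_i x^{k+1})$ for $i=j$ and $y_i^k$ otherwise, which is exactly the definition of $y^{k+1}$ in Algorithm~\ref{alg_spdhg}; hence $(T_j w)_i = y_i^{k+1}$ for every $i$. Consequently the correction term in the primal block satisfies $A_j^T((T_j w)_j-y_j^k)=A_{j^k}^T(y_{j^k}^{k+1}-y_{j^k}^k)=\delta^k$, so the argument of the proximity operator in $(T_j w)_0$ equals $x^{k+1}-\tau A^T y^k-(1+p_{j^k}^{-1})\tau\delta^k = x^{k+1}-\tau\bar z^{k+1}$. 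Therefore $(T_j w)_0 = \mathrm{prox}_{\tau g}(x^{k+1}-\tau\bar z^{k+1})=x^{k+2}$, which proves (\ref{itTs}).

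For the second assertion I would use the standard subdifferential characterization of the proximity operator, namely $p=\mathrm{prox}_{\sigma h}(v)$ if and only if $v-p\in\sigma\partial h(p)$. The fixed-point equation $T_j\hat w=\hat w$ first forces the dual block $(T_j\hat w)_j=\hat y_j$, i.e. $\mathrm{prox}_{\sigma_j f_j^*}(\hat y_j+\sigma_j A_j\hat x)=\hat y_j$, which is equivalent to $A_j\hat x\in\partial f_j^*(\hat y_j)$. Because this makes the correction term $(T_j\hat w)_j-\hat y_j$ vanish, the primal fixed-point equation collapses to $\mathrm{prox}_{\tau g}(\hat x-\tau A^T\hat y)=\hat x$, equivalent to $0\in\partial g(\hat x)+A^T\hat y$. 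Requiring $\hat w$ to be a fixed point of $T_j$ for \emph{every} $j$ therefore yields the system $0\in\partial g(\hat x)+A^T\hat y$ together with $A_j\hat x\in\partial f_j^*(\hat y_j)$ for all $j$, which are precisely the first-order optimality conditions for the saddle point problem (\ref{saddle}). Conversely, any saddle point satisfies these inclusions and hence is a fixed point of each $T_j$, giving the equivalence.

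The main obstacle is the bookkeeping in the first part: one must correctly track the auxiliary sequences $z^k,\bar z^k$, recognize the telescoping identity $z^k=A^Ty^k$, and handle the implicit coupling in the definition of $T_j$, where the dual output $(T_jw)_j$ appears inside the primal update $(T_jw)_0$. This coupling is harmless because the dual block is computed from $(x,y)$ alone and does not depend on the primal output, but checking that the extrapolation factor $1+p_{j^k}^{-1}$ matches $\bar z^{k+1}$ exactly is the crux of the argument. The second part is then a routine translation between proximity operators and subdifferentials.
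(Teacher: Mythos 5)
Your proposal is correct and follows essentially the same route as the paper: you establish $z^k = A^T y^k$ (via telescoping rather than the paper's induction, which is the same computation), match $\bar z^{k+1}$ with the extrapolated argument of the primal proximity operator, verify (\ref{itTs}) block by block, and reduce the fixed-point equations of all $T_j$ to the conditions $\hat y_j = \mathrm{prox}_{\sigma_j f_j^*}(\hat y_j+\sigma_j A_j\hat x)$ and $\hat x = \mathrm{prox}_{\tau g}(\hat x-\tau A^T\hat y)$. The only cosmetic difference is that you unpack these conditions into the subdifferential inclusions $A_j\hat x\in\partial f_j^*(\hat y_j)$ and $0\in\partial g(\hat x)+A^T\hat y$, whereas the paper delegates this equivalence to a citation (Bredies \& Lorenz, 6.4.2).
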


%%%%%% proof
\begin{proof}
By definition of the iterates in Algorithm \ref{alg_spdhg}, $(T_{j^k}(x^{k+1},y^{k}))_i = y_i^{k+1}$ for every $i\in\{1,...,n\}$. By induction it is easy to check that $z^k = A^Ty^k$, and thus
\begin{equation*}
\begin{aligned}
    \bar{z}^{k+1} &= z^k + (1+\frac{1}{p_{j^k}})\delta^k \\ 
    &= A^Ty^{k} + (1+\frac{1}{p_{j^k}}) A_{j^k}^T(y_{j^k}^{k+1}-y_{j^k}^{k}) \\
    &= A^Ty^{k} + (1+\frac{1}{p_{j^k}}) A_{j^k}^T ((T_{j^k}(x^{k+1},y^{k}))_{j^k}-y_{j^k}^{k}) .
\end{aligned}
\end{equation*}
Thus
$ (T_{j^k}(x^{k+1},y^{k}))_0 = \mathrm{prox}_{\tau g}(x^{k+1} - \tau \bar{z}^{k+1}) = x^{k+2} $
, which proves (\ref{itTs}). Now let $w$ be a fixed point of $T_j$ for every $j$. Then, for any $j$, 
$$y_j = w_j= (T_j w)_j = \text{prox}_{\sigma_i f_j^*}(y_j+\sigma_j A_jx), $$
from where it follows that, for any $j$,
\begin{equation*}
\begin{aligned}
    x = w_0 = (T_j w)_0 
    &= \text{prox}_{\tau g}(x-\tau A^Ty - (1+\frac{1}{p_j}) \tau A_j^T((T_j w)_j-y_j) \\
    &= \text{prox}_{\tau g}(x-\tau A^Ty).
\end{aligned}
\end{equation*}
These conditions on $x$ and $y$ define a saddle point (\cite{bredieslorenz}, 6.4.2). The converse result is direct.
\end{proof}

%%%%%% proof cluster_saddle
\begin{proof}[Proof of Proposition~\ref{cluster_saddle}]
Let $j^k$ be the sampling generated by the algorithm and let $z^k= (x^{k+1},y^{k})$. By Lemma \ref{fixpoints} we have $z^{k+1} = T_{j^k}z^k$ and, by~(\ref{ntozero}),
\begin{equation} \label{n-n+1}
    \mathbb{E}(\|z^{k} - z^{k-1}\|^2) =
    \mathbb{E}(\|x^{k+1} - x^k\|^2 + \|y^k-y^{k-1}\|^2)
    \to 0 .
\end{equation}
Furthermore, by the properties of the conditional expectation,
\begin{equation*}
\begin{aligned}
\mathbb{E}(\|z^{k+1} - z^k\|^2) = \mathbb{E}(\mathbb{E}^k(\|z^{k+1} - z^k\|^2)) 
&= \mathbb{E}\Big(\sum_{j=1}^n \mathbb{P}(j^k = j)\|T_j z^{k} - z^k\|^2\Big)\\
&= \sum_{j=1}^n \mathbb{P}(j^k = j)\mathbb{E}(\|T_j z^{k} - z^k\|^2) .
\end{aligned}
\end{equation*}
By assumption $p_j = \mathbb{P}(j^k=j) >0$, thus by (\ref{n-n+1}) we have
$\mathbb{E}(\|T_j z^k-z^k\|^2)\to0$ for every $j$ 
and therefore
\begin{equation} \label{Tstozero}
   T_j z^k-z^k \to 0 \quad\text{a.s. for every } j\in \{1,...,n\}.
\end{equation}
Assume now a convergent subsequence $w^{\ell_k}\to w^*$. From (\ref{n-n+1}), $ y^{k}-y^{k-1}\to0 $ a.s. and so $z^{\ell_k}$ also converges to $w^*$. By (\ref{Tstozero}) and the continuity of $T_j$, there holds
$$ w^* = \lim_{k\to\infty}z^{\ell_k} = \lim_{k\to\infty} T_{j}z^{\ell_k} = T_{j}\big(\lim_{k\to\infty}z^{\ell_k}\big) = T_jw^* \quad\text{a.s.}$$
for every $j$. Hence $w^*$ is almost surely a fixed point of $T_j$ for each $j$ and, by Lemma~\ref{fixpoints}, $w^*$ is a saddle point. 
\end{proof}

%%%%%%%%%%%%%%%%%%%%%%%%%%%%%%%%%%%%%%%%%%%%%%%%%%%%%%
\section{Relation to Other Work}\label{sec:rel}
%
%%%%%% SPDHG
\subsection{Chambolle et al. (2018)}
In the original paper for SPDHG \cite{spdhg}, it is shown that, under Assumption~\ref{assu}, the \textit{Bregman distance} to any solution $\hat{x},\hat{y}$ of (\ref{saddle}) converges to zero, i.e. the iterates $x^k,y^k$ of Algorithm \ref{alg_spdhg} satisfy
\begin{equation} \label{gap}
    D_g^{-A^T\hat{y}}(x^k,\hat{x}) + D_{f^*}^{A\hat{x}}(y^k,\hat{y}) \to 0 \quad\text{a.s.,}
\end{equation}
where the Bregman distance is defined by $ D_h^q(u,v) = h(u) - h(v) - \langle q,u-v \rangle $ for any functional $h$ and any point $q\in\partial h(v)$ in the subdifferential of $h$. 
In \cite{spdhg}, it is shown that (\ref{gap}) implies $(x^k,y^k) \to (\hat{x},\hat{y})$ a.s.\@ if $f_i$ or $g$ are strongly convex. 

% Another difference with \cite{spdhg} is that we limit ourselves to serial sampling in Algorithm~\ref{alg_spdhg}. In this context, condition (\ref{assu}) is equivalent to the step-size condition of the original SPDHG result (\cite{spdhg}, Theorem 4.3).

%%%%% C&P
\subsection{Combettes \& Pesquet (2014)}
In \cite{combettesPesquet}, Combettes \& Pesquet look into the convergence of random sequences $(w^k)_{k\in\mathbb{N}}$ of the form 
\begin{equation} \label{T}
    w_i^{k+1} = \left\{ \begin{tabular}{ll}
        $(T w^k)_i$ & if $i\in\mathbb{S}^k$ \\
        $w_i^k$ & else
    \end{tabular} \right.
\end{equation}
where $\mathbb{S}^k\subset\{1,...,n\}$ is chosen at random. They use the Robbins-Siegmund lemma (Lemma~\ref{lemma2.2}) to prove that, for a nonexpansive operator $T$, the sequence $(w^k)_{k\in\mathbb{N}}$ converges to a fixed point of $T$. Later, Pesquet \& Repetti \cite{pesquetRepetti} used this to prove convergence for a wide class of random algorithms of the form (\ref{T}), where $T = (I+B)^{-1}$ is the resolvent operator of a monotone operator $B$.

%%%%%% Ala et al.
\subsection{Alacaoglu et al. (2019)}
Recently Alacaoglu et al. also proposed a proof for the almost sure convergence of SPDHG (\cite{alacaoglu}, Theorem 4.4) using a strategy similar to ours. They describe SPDHG as a special case of a sequence $(\mathbf{w}^k)_{k\in\mathbb{N}}\subset\mathbb{R}^{dn+n^2}$ of the form (\ref{T}) for an operator $\textbf{T}:\mathbb{R}^{dn+n^2}\to\mathbb{R}^{dn+n^2}$, where $X=\mathbb{R}^d$ and $Y=\mathbb{R}^n$. They show that, under the step size condition (\ref{assu_i}), 
% the iterates $x^k,y^k$ satisfy 
% %
% \begin{equation} \label{ala1}
%     V(x^{k+1}-x^{k},y^k-y^{k-1}) \geq (1-\gamma)\left(\|x^{k+1}-x^k\|^2_{\tau^{-1}} + \|y^k-y^{k-1}\|^2_{QS^{-1}}\right)
% \end{equation}
% %
% where $\gamma$ is such that $p_i^{-1}\tau\sigma_i\|A_i\|^2\leq\gamma^2<1$, and use this to prove that 
the iterations $\mathbf{w}^k\in\mathbb{R}^{dn+n^2}$ converge to a fixed point of $\mathbf{T}$.
In contrast, we describe SPDHG using a random sequence $(T^k)_{k\in\mathbb{N}}$ of more intuitive operators $T^k:X\times Y\to X\times Y$, and then prove the sequence $(w^k)_{k\in\mathbb{N}}$ converges to a fixed point $w\in\bigcap_{k\in\mathbb{N}}\mathrm{Fix}T^k$.

%%%%%%%%%%%%%%%%%%%%%% Numerical Examples %%%%%%%%%%%%%%%%%%%%%%%

%%%%%%%% Numerical Examples
\section{Numerical Examples}
\label{sec:numeric}

In this section we use MRI reconstruction as a case study to illustrate the performance of SPDHG in comparison to PDHG. 
In parallel MRI reconstruction~\cite{mri}, a signal $x$ is reconstructed from multiple data samples $b_1,...,b_n$ of the form $b_i=A_ix+\eta_i$, where each $A_i:\mathbb{C}^d\to \mathbb{C}^m$ is an encoder operator from the signal space to the sample space, and $\eta_i\in \mathbb{C}^m$ represents noise added to the measurements.
A least-squares solution is given by 
\begin{equation} \label{mri_model}
    \hat{x} \in \arg\min_x \sum_{i=1}^n \|A_ix-b_i\|^2 + g(x)
\end{equation}
where $g$ acts as a regularizer.
We recover our convex minimization template~(\ref{min}) by identifying $\mathbb{C}^d$ with $\mathbb{R}^{2d}$ and setting $X=\mathbb{R}^{2d}$, $Y_i=\mathbb{R}^{2m}$ and $ f_i(y) = \|y - b_i\|^2 $.

Here, we consider undersampled data with $n$ coils, i.e. $A_i = S\circ F\circ C_i$, where $S:\mathbb{C}^d\to\mathbb{C}^m$ is a subsampling operator, $F:\mathbb{C}^d\to\mathbb{C}^d$ represents the discrete Fourier transform and $C_ix = c_i\cdot x $ is the element-wise multiplication of $x$ and the $i$-th coil-sensitivity map $c_i\in \mathbb{C}^d$. 
Data samples $b_i = A_ix^\dagger + \eta_i$ have been synthetically generated from an original reconstruction $x^\dagger$ from the BrainWave database \cite{MRIdata}.

In order to choose step-size parameters $\sigma_i$ and $\tau$ which comply with step-size condition (\ref{assu_i}), we consider $\sigma_i = \gamma(p_i/\|A_i\|)$ and $\tau = \gamma^{-1} (0.99/\max_i\|A_i\|)$, and then choose the value of $\gamma$ (among orders of magnitude $\gamma=10^{-5},10^{-4},...,10^{5}$) that gives the lowest objective $\Phi(x^k) = \sum_{i=1}^n f_i(Ax^k)+g(x^k)$ after 100 epochs. 
Step sizes for PDHG have been optimized in the same way by setting $\sigma = \gamma/\|A\|$  and $\tau = \gamma^{-1}(0.99/\|A\|)$, which satisfies its step-size condition $\tau\sigma\|A\|^2<1$ \cite{chambollePock}.

%%%%%%%figure L2
\begin{figure}[t]
\centering
\subfloat[\scriptsize Ground truth  ]{\includegraphics[width=0.2\textwidth]{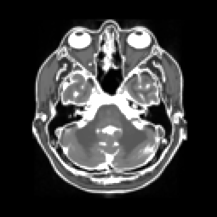}} \quad
\subfloat[\scriptsize Target  ]{\includegraphics[width=0.2\textwidth]{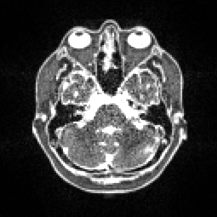}} \quad
\subfloat[\scriptsize  SPDHG 4 coils \\ \centering{ 100 epochs}]{\includegraphics[width=0.2\textwidth]{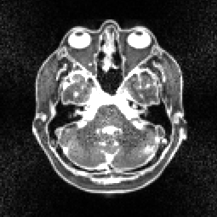}} \quad
\subfloat[\scriptsize  PDHG 4 coils \\ \centering{ 100 iterations}]{\includegraphics[width=0.2\textwidth]{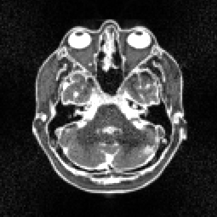}} \\
\vspace{-3mm}
\subfloat[\scriptsize  SPDHG 8 coils \\ \centering{ 100 epochs}]{\includegraphics[width=0.2\textwidth]{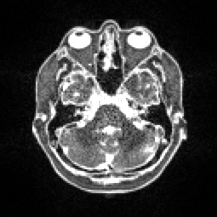}} \quad
\subfloat[\scriptsize  PDHG 8 coils \\ \centering{ 100 iterations}]{\includegraphics[width=0.2\textwidth]{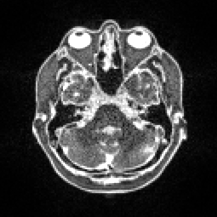}} \quad
\subfloat[\scriptsize  SPDHG 8 coils \\ \centering{ 1,000 epochs}]{\includegraphics[width=0.2\textwidth]{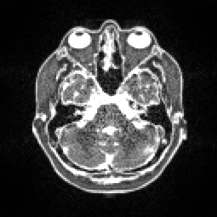}} \quad
\subfloat[\scriptsize  PDHG 8 coils \\ \centering{ 1,000 iterations}]{\includegraphics[width=0.2\textwidth]{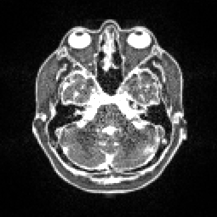}}
\caption{Images reconstructed using different algorithms. Subfigure (a) shows the original ground truth $x^\dagger$ from which the noisy data samples have been synthetically generated, while subfigure (b) shows a reliable reconstruction $x^*$ to which the other figures should be compared. }
\label{fig: reconL2}
\end{figure}

Figure \ref{fig: reconL2} shows reconstructions obtained through SPDHG and PDHG. 
Here, we considered model (\ref{mri_model}) with squared 2-norm regularizer $g(x) = 10^{-4}\|x\|^2$. The \emph{target} solution $x^*$ in (b) has been computed by running SPDHG for large number of epochs ($>10^4$). Comparing subfigures (e) and (f), the solution for SPDHG seems to be closer to the target (b) than that of PDHG at 100 epochs. At 1,000 epochs, solutions (g) and (h) appear visually similar to each other.

Figure \ref{fig: reconTV} shows reconstructions using model (\ref{mri_model}) with total-variation regularizer $g(x)=10^{-4}\|\nabla x\|_1$. As before, solutions by SPDHG appear closer to the target than those by PDHG. Notice as well how increasing the number of coils seems to yield more detailed although noisier reconstructions. 

Figure \ref{fig: convergence plot} summarizes the performance of both algorithms for all of these reconstructions and some more.

\section{Conclusions}

In this paper we showed the almost sure convergence of SPDHG by using alternative arguments as provided by Alacaoglu et al \cite{alacaoglu}. We also investigated SPDHG in the context of parallel MRI where we observed a significant speed-up, thereby supporting the practical use of variational regularization methods for this application.

%\newpage

%%%%%%%figure TV
\begin{figure}[ht]
\centering
\subfloat[\scriptsize Ground truth ]{\includegraphics[width=0.2\textwidth]{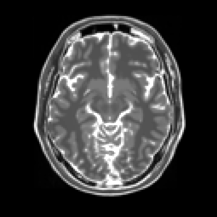}} \quad
\subfloat[\scriptsize Target ]{\includegraphics[width=0.2\textwidth]{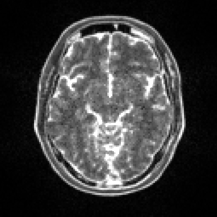}} \quad
\subfloat[\scriptsize  SPDHG 4 coils \\ \centering{ 100 epochs}]{\includegraphics[width=0.2\textwidth]{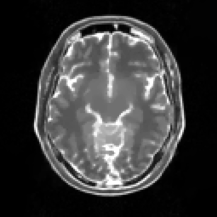}} \quad
\subfloat[\scriptsize  PDHG 4 coils \\ \centering{ 100 iterations}]{\includegraphics[width=0.2\textwidth]{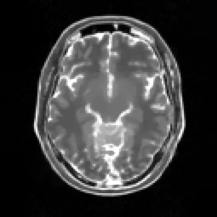}} \\
\vspace{-3mm}
\subfloat[\scriptsize  SPDHG 8 coils \\ \centering{ 100 epochs}]{\includegraphics[width=0.2\textwidth]{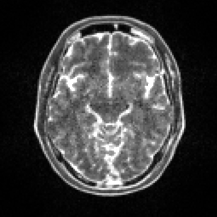}} \quad
\subfloat[\scriptsize  PDHG 8 coils \\ \centering{ 100 iterations}]{\includegraphics[width=0.2\textwidth]{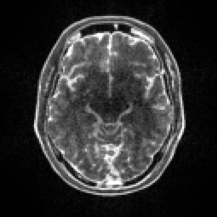}} \quad
\subfloat[\scriptsize  SPDHG 8 coils \\ \centering{ 1,000 epochs}]{\includegraphics[width=0.2\textwidth]{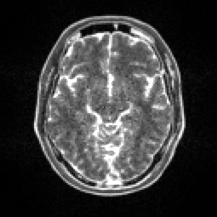}} \quad
\subfloat[\scriptsize  PDHG 8 coils \\ \centering{ 1,000 iterations}]{\includegraphics[width=0.2\textwidth]{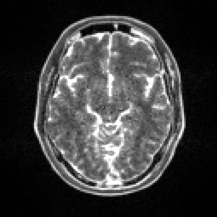}}
\caption{Images reconstructed using noisy data samples synthetically generated from the ground truth~(a). Subfigure (b) shows the target reconstruction for comparison. }
\label{fig: reconTV}
\end{figure}

%%%%% convergence plot
\begin{figure}[ht]
\centering
\subfloat[\scriptsize  $L^2$-regularizer]{\includegraphics[width=0.53\textwidth]{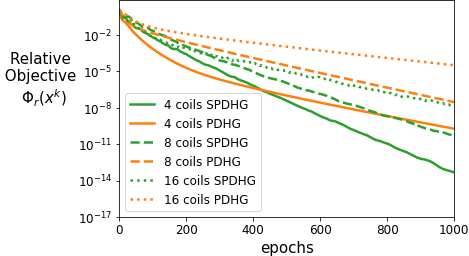}} \quad
\subfloat[\scriptsize  $TV$-regularizer]{\includegraphics[width=0.44\textwidth]{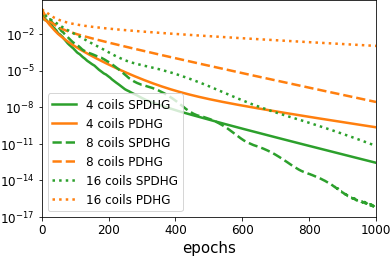}} 
\caption{ Relative objective $\Phi_r(x^k) = \frac{\Phi(x^k) - \Phi(x^*)}{\Phi(x^0) - \Phi(x^*)}$ at every epoch for image reconstruction, including the examples from Figures \ref{fig: reconL2} and \ref{fig: reconTV}.}
\label{fig: convergence plot}
\end{figure}

%\FloatBarrier

%%%%%%%%%%%%%%%   %%%%%%%%%%%%%%%%%%   %%%%%%%%%%%%%%%%%   %%%%%%%%%%%%%%%
\bibliographystyle{splncs04}
\bibliography{references}
\end{document}